\newtheorem{theorem}{Theorem}[section]
\theoremstyle{definition}
\newtheorem{definition}{Definition}
\newtheorem{remark}{Remark}
\newtheorem*{example}{Example}
\newcommand{\rank}{\mathop{\mathrm{rank}}}
\newcommand{\R}{\mathbb{R}}
\newcommand{\N}{\mathbb{N}}
\newcommand{\Z}{\mathbb{Z}}
\newcommand{\K}{\mathcal{K}}
\newcommand{\A}{\mathcal{A}}
\begin{document}

	\title{Euler Calculus on spaces homeomorphic to definable sets and some applications}
		
	\author[E. Mac\'{\i}as-Virg\'os]{E. Mac\'{\i}as-Virg\'os}
	\address{Department of Mathematics, University of Santiago de Compostela, Spain.}
	\email{quique.macias@usc.es}

	\author[D. Mosquera-Lois]{D. Mosquera-Lois}
	\address{Institute of Mathematics, University of Santiago de Compostela, Spain.}
	\email{david.mosquera.lois@usc.es}
	
	\thanks{The first author was partially supported by MINECO-FEDER research project MTM2016-78647-P. The second author was partially supported by an IMAT research grant}

	\date{\today}
	
	\subjclass[2010]{03C64, 
		28A25, 
		55R05, 
		}
	\keywords{ Euler calculus, Euler-Poincar\'e characteristic, Euler integration,  fiber bundles, sensor networks}

\begin{abstract}
We show that integration with respect to the Euler-Poincar\'e characteristic can be extended from the setting of definable sets to the setting of topological spaces homeomorphic to definable sets. We use that extension to generalize a result regarding sensor networks due to Ghrist and Baryshnikov, in order to make it more flexible in applications. Finally, we obtain both an extension and a combinatorial proof of a classical result  about the  Euler-Poincar\'e characteristic of fiber bundles.
\end{abstract}

\maketitle

\section{Introduction}

For topological spaces,  the Euler-Poincar\'e characteristic can be defined as the alternating sum of the ranks of the singular homology groups. For finite simplicial complexes this coincides with the classical combinatorial definition. As a consequence, the  Euler-Poincar\'e characteristic is a topological invariant. In the context of o-minimal structures and definable sets \cite{Viro}, a combinatorial definition of the characteristic is used \cite{Dries} in order to define an integration with respect to Euler-Poincar\'e characteristic. However, it has the drawback of no longer being a homotopy invariant. Moreover,  definable sets can be quite restrictive when studying topological questions or using topological arguments. 

In the work of Curry, Baryshnikov, Ghrist and Robinson \cite{Ghristsurvey} it is implicitly suggested that the combinatorial Euler-Poincar\'e characteristic is a topological invariant. Furthermore, spaces which are homeomorphic to definable sets are implicitly used. However, neither this is stated explicitly nor an integration theory is developed in this broader context. We aim to fill this gap by formalizing these ideas. The original formulation of Euler Calculus used subanalytic sets \cite{Schapira1991,Schapira1995}. More recent work uses definable sets. The present paper applies a theorem from Beke (if two definable sets are homeomorphic, where the homeomorphism does not need to be definable, then they have equal characteristic) to extend Euler calculus to spaces homeomorphic to definable sets. We use that extension to provide generalizations of some results regarding enumeration of objectives. Moreover, by means of that extension and the heuristics of Euler calculus, we provide a combinatorial proof of the multiplicative behavior of the Euler-Poincar\'e characteristic for fiber bundles where the total space, the base and the fiber are homeomorphic to definable sets, by discretizing Fubini's theorem and avoiding the usage of spectral sequences. \\

\noindent{\em Acknowledgement} The second author would like to thank Robert Ghrist for enlightening discussions on the topic of this work.

\section{Preliminaries}

In this section we summarize some definitions and results taken from \cite{Beke,Ghristsurvey,Dries}.

\begin{definition}
	An {\em open $n$-simplex} is the interior of an $n$-simplex $[v_0,\ldots, v_n]$, i.e., the set 
	$$\{t_0v_0+ \cdots+ t_nv_n \in \mathbb{R}^m \; \colon \; \sum_i{t_i}=1 , \; t_i> 0 \; \forall i\},$$
	where $v_0, \ldots, v_n$  are affine independent points in $\mathbb{R}^m$, $m\geq n$.
\end{definition}

\begin{definition}
	A {\em generalized simplicial complex} is a finite collection $\K $ of open simplices in $\R^n$, satisfying that given two open simplices in the complex, the intersection of their closures is the empty set or the closure of an open simplex in the complex. 
\end{definition}	

\begin{definition}
	We define the {\em Euler-Poincar\'e  characteristic} of the generalized simplicial complex $\K$ as the alternating sum of the number of open simplices in each dimension, i.e.,
	$$\chi(\K)=\sum_{i=0}^{n}(-1)^{i}c_i,$$ where 
	$c_i$ is the number of open $i$-simplices. 
\end{definition}	

\begin{definition}
	Given a topological space $X$, a {\em triangulation} is a homeomorphism $h \colon |\K | \rightarrow X$ where $|\K |$ is the geometric realization of the generalized simplicial complex $\K $.
\end{definition}

\begin{remark}
	The Euler-Poincar\'e  characteristic of an open $n$-simplex is $(-1)^n$, hence, it is not homotopy invariant. However, it is homotopy invariant on compact sets.
\end{remark}

\begin{definition}
	An {\em o-minimal structure} (over $\R$) is a colection $\A =\{\A _n\}_{n\in \N}$, where each $\A _n$ is an algebra of subsets of $\R^n$, 
	satisfying:
	\begin{enumerate}
		\item $A\in \A _n$, $B\in \A _m \Rightarrow A\times B\in \A _{n+m}$.
		\item $A\in \A _n \Rightarrow \pi(A)\in \A _{n-1}$, where $\pi$ is the projection into the first $n-1$ coordinates.
		\item $\Delta_{i,j}=\{(x_1,\ldots,x_n)\in \R^n \colon x_i=x_j\} \in \A _n$ for all $1\leq i<j\leq n$.
		\item $\{(x,y)\in \R^2\colon x<y\} \in \A _2$. 
		\item $\A _1=\{\text{finite unions of points and open intervals of $\R$}\}.$ 
	\end{enumerate}
	
	Given $A\subset \R^n$, we say that $A$ is  {\em definable in $\A $} if $A \in \A _n$. A map between definable sets is a {\em definable map} if its graph is definable. 
\end{definition}	

\begin{theorem}[{Definable triangulation theorem \cite{Dries}}] 
	Assume that $X\subset \R^n$ is a definable set and $\{X_i\}_{i=1}^{m}$ is a finite family of definable subsets of $X$. Then there exists a definable triangulation of $X$ compatible with the collection of subsets.
\end{theorem}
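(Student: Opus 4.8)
The plan is to argue by induction on the ambient dimension $n$, with the Cell Decomposition Theorem as the engine that drives the induction. Since only the o-minimal axioms are at hand, I would first develop, in the standard order, the foundational results they yield: the Monotonicity Theorem for definable functions of one variable, and then --- by a simultaneous induction on $n$ --- the Cell Decomposition Theorem, asserting that any finite family of definable subsets of $\R^n$ admits a finite partition into cells, each cell being definably homeomorphic to an open box $(0,1)^d$ and arranged so that every member of the family is a union of cells. I would also reduce at the outset to the case where $X$ is bounded: every o-minimal structure over the real field contains the semialgebraic sets, so the semialgebraic homeomorphism $x \mapsto x/\sqrt{1+|x|^2}$ carries $\R^n$ onto the open unit ball and sends $X$ and the $X_i$ to bounded definable sets; a triangulation of the images then pulls back along this definable homeomorphism, so we may assume $X$ is bounded.

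The base case $n=1$ is immediate: by axiom (5) the sets $X$ and $X_i$ are finite unions of points and open intervals, and refining at the finitely many endpoints partitions $X$ into points and open intervals sharing a common vertex set, which is literally a one-dimensional generalized simplicial complex compatible with every $X_i$. For the inductive step I assume the statement over $\R^{n-1}$. I first apply a linear (hence definable) change of coordinates placing the projection $\pi\colon\R^n\to\R^{n-1}$ onto the first $n-1$ coordinates in \emph{good position} with respect to the family, so that the frontiers of the relevant sets contain no vertical segments; this is the content of the good-directions lemma and is what makes the cells behave as graphs over the base. Applying the Cell Decomposition Theorem then yields a finite cell decomposition $\mathcal{C}$ of $\R^n$ compatible with $X$ and all the $X_i$, in which the cells lying over a fixed base cell $D$ are precisely the graphs $\{t=f(x)\}$ of continuous definable functions $f\colon D\to\R$ and the open bands $\{f(x)<t<g(x)\}$ between consecutive such graphs.

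Next I project and lift. The images $\pi(C)$ for $C\in\mathcal{C}$, together with the projections of the $X_i$, form a finite definable family in $\R^{n-1}$, so the induction hypothesis furnishes a complex $L$ and a definable triangulation $\psi\colon|L|\to\overline{\pi(X)}$ compatible with all of them; in particular each base cell is a union of images of open simplices of $L$. Fixing an open simplex $\tau$ of $L$ whose image lies in a base cell $D$, the functions $f_1<\cdots<f_k$ describing the cells over $D$ pull back to continuous definable functions on $\tau$. I then subdivide $\overline\tau$ finitely so that each pulled-back $f_i$ becomes affine on every subsimplex --- possible because a continuous definable function on a simplex can be made piecewise linear after a finite subdivision --- and triangulate the stacked prism over each subsimplex by the standard prism decomposition, inserting the graphs as intermediate faces and filling the bands with open simplices. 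Carrying this out over every simplex of $L$ and assembling the pieces produces a generalized simplicial complex $K$ and a definable homeomorphism $h\colon|K|\to X$; since each $X_i$ is a union of cells, it is the image of a subcomplex, so $h$ is the required compatible triangulation.

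The main obstacle is the gluing in the lifting step: one must ensure that the graph-defining functions $f_i$ extend continuously to the closed base simplices, keep their strict order up to the boundary (so graphs collide only where cells genuinely meet), and that the prism triangulations chosen over two adjacent base simplices agree on their shared face. Securing these compatibilities --- which is exactly what the good-directions normalization and a careful analysis of cell frontiers are for --- is the technically heaviest part of the argument. By comparison, the reduction to the bounded case, the base case, and the verification that definability is preserved throughout (each constructed map is a definable base map composed with fiberwise affine interpolation, hence definable) are routine.
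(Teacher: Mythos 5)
The paper offers no proof of this statement; it is imported verbatim from van den Dries, so the relevant comparison is with the standard proof there. Your architecture matches it: reduce to the bounded case, induct on $n$ with the Cell Decomposition Theorem, normalize by a good-directions change of coordinates, triangulate the projection by the inductive hypothesis, and lift over each base simplex through the stack of graphs and bands. (One small caveat on the reduction: the compression $x \mapsto x/\sqrt{1+|x|^2}$ is semialgebraic, and the paper's axioms (1)--(5) for an o-minimal structure do not by themselves put the semialgebraic sets into $\mathcal{A}$; the standard treatment assumes the structure expands the ordered real field, and that assumption should be made explicit.)

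The genuine gap is in the lifting step. The claim that ``a continuous definable function on a simplex can be made piecewise linear after a finite subdivision'' is false: $f(t)=t^2$ on $[0,1]$ is affine on no nondegenerate subinterval. Worse, if that step worked as written, the graphs and bands over each subsimplex would be honest affine simplices and convex prisms in $\R^n$, so $X$ itself would come out as a finite union of open simplices of $\R^n$, i.e.\ literally a polyhedron --- which the theorem does not assert (the arc $\{(t,t^2): t\in[0,1]\}$ is definably homeomorphic to a segment but is not a union of open simplices of $\R^2$). The correct mechanism --- which your closing remark about ``fiberwise affine interpolation'' gestures at, but which is not what the subdivision step says --- is to leave the $f_i$ untouched and build a PL \emph{model} of each prism, with levels given by the affine interpolants of the vertex values $f_i(v)$ (or by constant integer heights), then map the model onto the actual stack of graphs and bands by the definable homeomorphism that is the identity on the base and affine on each vertical fiber. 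One must then verify that the interpolants retain the strict ordering on open simplices and degenerate on faces exactly as the cell frontiers do (note that $f<g$ on an open simplex does not prevent $f=g$ at all its vertices, so this is not automatic); that verification is exactly what the special-position preparation and the frontier analysis provide. You correctly flag this gluing as the hard part, but the subdivide-until-affine shortcut does not get you through it.
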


So, the {\em Euler-Poincar\'e characteristic of a definable set} is defined as the characteristic of the associated generalized simplicial complex.

\begin{theorem}[{\cite{Dries}}]
	Assume $X\subset \R^n$ and $Y\subset \R^m$ 
	are definable sets. Then:
	\begin{enumerate}
		\item $\chi(X \cup Y)=\chi(X)+\chi(Y)-\chi(X \cap Y).$
		\item $\chi(X\times Y)=\chi(X)\cdot \chi(Y).$
	\end{enumerate}	 
\end{theorem}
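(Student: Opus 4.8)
The plan is to reduce both identities to the combinatorics of simplex‑counting, relying on the foundational fact---implicit in the definition just given---that $\chi$ of a definable set does not depend on the chosen definable triangulation. From this well‑definedness I would first record the consequence used throughout: $\chi$ is invariant under \emph{definable} homeomorphisms, since a definable homeomorphism $f\colon X\to Y$ turns any definable triangulation $h\colon|\K|\to X$ into a definable triangulation $f\circ h\colon|\K|\to Y$ on the same complex $\K$, whence $\chi(Y)=\chi(\K)=\chi(X)$. For part (1) I would also tacitly place $X$ and $Y$ inside a common ambient $\R^{N}$, so that $X\cup Y$ and $X\cap Y$ are definable, as each $\A_{N}$ is an algebra of sets.

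\textbf{Part (1).} First I would apply the definable triangulation theorem to $X\cup Y$ with the compatible family $\{X,Y,X\cap Y\}$, obtaining a triangulation $h\colon|\K|\to X\cup Y$ in which each of $X$, $Y$ and $X\cap Y$ is the realization of a subcomplex. Compatibility means every open simplex of $\K$ lies entirely inside or entirely outside $X$, and likewise for $Y$; hence each simplex falls into exactly one of the three disjoint pieces $X\setminus Y$, $Y\setminus X$, $X\cap Y$ that partition $X\cup Y$. Writing $a$, $b$, $c$ for the alternating simplex‑counts over these three families, the restricted (sub)triangulations give $\chi(X)=a+c$, $\chi(Y)=b+c$, $\chi(X\cap Y)=c$ and $\chi(X\cup Y)=a+b+c$, whence $\chi(X\cup Y)=\chi(X)+\chi(Y)-\chi(X\cap Y)$ by direct substitution (valid even if $X\cap Y=\emptyset$, since then $c=0$).

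\textbf{Part (2).} From part (1) and $\chi(\emptyset)=0$ I would first deduce finite additivity: if a definable set is a disjoint union of finitely many definable pieces, its characteristic is the sum of theirs (induct on the number of pieces). Then, fixing definable triangulations $h_{X}\colon|\K|\to X$ and $h_{Y}\colon|\mathcal{L}|\to Y$, the products $h_{X}(\sigma)\times h_{Y}(\tau)$, as $\sigma$ ranges over $\K$ and $\tau$ over $\mathcal{L}$, form a finite partition of $X\times Y$ into definable sets, so
\[ \chi(X\times Y)=\sum_{\sigma,\tau}\chi\bigl(h_{X}(\sigma)\times h_{Y}(\tau)\bigr). \]
The cell $h_{X}(\sigma)\times h_{Y}(\tau)$ is definably homeomorphic to $\sigma\times\tau$, and an open $i$‑simplex times an open $j$‑simplex is definably homeomorphic to $\R^{i}\times\R^{j}=\R^{i+j}$, hence to a single open $(i+j)$‑simplex; by homeomorphism invariance its characteristic is $(-1)^{\dim\sigma+\dim\tau}$. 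Factoring the double sum,
\[ \chi(X\times Y)=\Bigl(\sum_{\sigma}(-1)^{\dim\sigma}\Bigr)\Bigl(\sum_{\tau}(-1)^{\dim\tau}\Bigr)=\chi(X)\cdot\chi(Y). \]

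The routine bookkeeping---that subcomplexes restrict to genuine triangulations, and that products of definable simplices are definable---is harmless, so the real content sits in the inputs rather than the manipulation. The hard part is the foundational well‑definedness of $\chi$ under definable triangulations, on which both the additivity in part (1) and the homeomorphism invariance in part (2) rest: this is precisely the nontrivial o‑minimality result I am taking for granted, and without it the counting arguments above would not even be internally consistent.
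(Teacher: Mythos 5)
The paper offers no proof of this statement---it is imported verbatim from van den Dries---so there is nothing internal to compare against; judged on its own, your argument is essentially the standard one from that reference, transposed from cell decompositions to the triangulation language the paper actually uses, and it is sound: part (1) is additivity over the partition $(X\setminus Y)\sqcup(X\cap Y)\sqcup(Y\setminus X)$ read off a compatible triangulation of $X\cup Y$, and part (2) is the product partition into simplex-by-simplex cells, each contributing $(-1)^{\dim\sigma+\dim\tau}$. You correctly isolate the one genuinely nontrivial input (well-definedness of $\chi$ across definable triangulations, equivalently additivity), which the paper also takes on faith. The only step worth tightening is the claim that $\sigma\times\tau$ is definably homeomorphic to an open $(i+j)$-simplex: this needs the structure to contain the relevant semialgebraic maps (true for o-minimal expansions of the real field, the intended setting), and can be avoided entirely by triangulating the open product polytope directly and noting that the interior of a convex $d$-polytope has combinatorial characteristic $(-1)^{d}$.
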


\begin{definition}
	If $X$ is a definable subset of $\R^n$, we define the $\mathbb{Z}$-module of {\em constructible functions} $CF(X)$ as follows: a function $h\colon  X \rightarrow \Z$ belongs to $CF(X)$ iff it is bounded, has compact support and the level sets are definable.
\end{definition}

\begin{theorem}[Deconstruction of constructible functions] \label{tma:deconstruction:functions} If $X$ is a definable set and $h \in CF(X)$, then $h=\sum_{\alpha}c_{\alpha}\mathbb{1}_{\sigma_{\alpha}}$, where  $c_{\alpha}\in \Z$, $\sigma_{\alpha}$ are the open simplices of a triangulation and $\mathbb{1}_{\sigma_{\alpha}}$ are the characteristic functions of $\sigma_{\alpha}$.
\end{theorem}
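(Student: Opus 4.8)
The plan is to use the Definable Triangulation Theorem to produce a single triangulation of $X$ on which $h$ is constant across each open simplex. First I would observe that, since $h$ is bounded and integer-valued, it takes only finitely many values; let $n_1,\ldots,n_k$ be the nonzero values assumed by $h$, and set $X_j=h^{-1}(n_j)$ for $1\leq j\leq k$. By the definition of a constructible function, each level set $X_j$ is definable, so $\{X_j\}_{j=1}^{k}$ is a finite family of definable subsets of the definable set $X$.

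Next I would apply the Definable Triangulation Theorem to $X$ together with the family $\{X_j\}_{j=1}^{k}$, obtaining a definable triangulation $\varphi\colon |\K|\rightarrow X$ compatible with this collection. The point of \emph{compatibility} is that each $X_j$ is a union of images of open simplices of $\K$; equivalently, every open simplex $\sigma_{\alpha}$ of the triangulation is entirely contained in a single level set, that is, in some $X_j$ or in the zero set $X\setminus\bigcup_{j}X_j$. Consequently $h$ restricts to a constant on each $\sigma_{\alpha}$, say with value $c_{\alpha}\in\Z$, where $c_{\alpha}=0$ precisely when $\sigma_{\alpha}$ lies in the zero set.

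Finally, since $\varphi$ is a homeomorphism and the open simplices of $\K$ partition its geometric realization $|\K|$, the images $\sigma_{\alpha}$ partition $X$: each $x\in X$ belongs to exactly one open simplex. Evaluating at such an $x\in\sigma_{\alpha}$ then gives $h(x)=c_{\alpha}=\sum_{\beta}c_{\beta}\mathbb{1}_{\sigma_{\beta}}(x)$, so the identity $h=\sum_{\alpha}c_{\alpha}\mathbb{1}_{\sigma_{\alpha}}$ holds pointwise. Discarding the finitely many terms with $c_{\alpha}=0$ yields the stated expression as a finite integer combination of characteristic functions of open simplices of a triangulation.

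I expect the step requiring the most care to be the precise use of compatibility, namely the implication that compatibility of the triangulation with the level sets forces $h$ to be constant along each open simplex; once this is granted, the remaining steps are essentially bookkeeping over the partition induced by $\varphi$. A secondary point worth stating explicitly is that the finiteness of the value set of $h$ (hence of the family $\{X_j\}$, to which the triangulation theorem can be applied) follows from the boundedness and integer-valuedness built into the definition of $CF(X)$.
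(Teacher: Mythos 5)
Your proof is correct and is the standard argument; the paper itself states this theorem without proof in its preliminaries (citing van den Dries), and your route --- finitely many values from boundedness and integrality, the Definable Triangulation Theorem applied to the finite family of nonzero level sets, and constancy of $h$ on each open simplex via compatibility and the partition of $X$ by open simplices --- is exactly the expected one. The one step you flag as delicate is indeed the whole content: compatibility makes each level set a union of open simplices, and since distinct open simplices of a (generalized) simplicial complex are disjoint, each simplex lies in a single level set, which is all you need.
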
 

\begin{definition}
	The \emph{Euler integral} is the homomorphism  
	$$\int\nolimits_X (\bullet) \, d\chi \colon CF(X) \rightarrow \Z$$ given by 
	$$\int\nolimits_X \bigg(\sum_\alpha c_\alpha\mathbb{1}_{\sigma_\alpha}\bigg) \, d\chi=\sum_\alpha c_\alpha\chi(\sigma_\alpha).$$	
\end{definition}

\begin{theorem}[{\cite{Ghristsurvey}}] The Euler integral satisfies the following properties:
	\begin{enumerate}
		\item Additivity with respect to the integrand:
		$$\int\nolimits_X\bigg(\sum_ih_i\bigg) \, d\chi =\sum_i\int\nolimits_Xh_i \, d\chi.$$
		\item Additivity with respect to the domain:
		$$\int\nolimits_{A\cup B}h \, d\chi =\int\nolimits_A h \, d\chi + \int\nolimits_B h \, d\chi - \int\nolimits_{A\cap B}h \, d\chi.$$
		\item Fubini's theorem: 
		If $f\colon  X \rightarrow Y$ is a continuous definable map and $h\in CF(X)$, then 
		$$
		\int\nolimits_X h \, d\chi=\int\nolimits_Y\bigg(\int\nolimits_{f^{-1}(y)}h \, d\chi(x)\bigg) \, d\chi(y).
		$$
	\end{enumerate}
\end{theorem}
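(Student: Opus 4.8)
The plan is to establish the three properties in increasing order of difficulty, treating the well-definedness of the integral as the common foundation. Before anything else I would verify that $\int_X h\,d\chi$ does not depend on the triangulation used in the deconstruction of Theorem \ref{tma:deconstruction:functions}. The key observation is that if $h=\sum_\alpha c_\alpha\mathbb{1}_{\sigma_\alpha}$ arises from a triangulation compatible with the level sets of $h$, then $h$ is constant on each open simplex $\sigma_\alpha$, so $c_\alpha$ is exactly the value of $h$ there. Grouping the simplices by value and using additivity of $\chi$ over the disjoint union of open simplices that make up each level set $\{h=s\}$, one rewrites the integral intrinsically as
\[\int_X h\,d\chi=\sum_{s\neq 0}s\,\chi(\{h=s\}).\]
This right-hand side is manifestly independent of the triangulation, since each level set is determined by $h$ alone and $\chi$ of a definable set is already a well-defined invariant.

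For additivity in the integrand (property 1), given finitely many $h_1,\dots,h_k$ I would invoke the definable triangulation theorem applied to the common refinement of all their level sets, obtaining a single triangulation on which each $h_i$ and the sum $\sum_i h_i$ is simple. On each open simplex the values add, so multiplying by $\chi(\sigma_\alpha)$ and summing gives the identity term by term; equivalently, the formula above exhibits the integral as $\Z$-linear. Property 2 then follows from the pointwise inclusion--exclusion identity $\mathbb{1}_{A\cup B}=\mathbb{1}_A+\mathbb{1}_B-\mathbb{1}_{A\cap B}$: multiply by $h$, choose a triangulation compatible with $A$, $B$, $A\cap B$ and the level sets of $h$, and apply property 1 together with the additivity of $\chi$ from the cited theorem.

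The substance is in property 3. By linearity in $h$ (property 1) it suffices to prove Fubini for a single indicator $h=\mathbb{1}_A$ with $A\subseteq X$ definable, in which case the statement reduces to $\chi(A)=\int_Y\chi(A\cap f^{-1}(y))\,d\chi(y)$, the inner integrand being $g(y)=\chi(A\cap f^{-1}(y))=\chi((f|_A)^{-1}(y))$. The crucial input is the definable (Hardt) trivialization theorem \cite{Dries}: there is a finite definable partition $Y=\bigsqcup_j Y_j$ and definable homeomorphisms $(f|_A)^{-1}(Y_j)\cong Y_j\times F_j$ over $Y_j$. Over each $Y_j$ every fiber is definably homeomorphic to $F_j$, so $g$ equals the constant $\chi(F_j)$ there; in particular $g=\sum_j\chi(F_j)\mathbb{1}_{Y_j}$ is constructible, which is precisely what makes the right-hand side well defined. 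Then
\[\int_Y g\,d\chi=\sum_j\chi(F_j)\,\chi(Y_j)=\sum_j\chi(Y_j\times F_j)=\sum_j\chi\big((f|_A)^{-1}(Y_j)\big)=\chi(A),\]
where the second equality is the product formula of the cited theorem and the last is additivity of $\chi$ over the partition $A=\bigsqcup_j(f|_A)^{-1}(Y_j)$.

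The main obstacle is the definable trivialization theorem, which does the real work: it simultaneously shows that the fiberwise characteristic is constant on definable pieces (hence that the inner integral is a genuine constructible function on $Y$) and that the product formula can be applied cell by cell. By contrast, checking that $g$ has compact support and bounded values---inherited from $h$---and carrying out the bookkeeping of the reduction to indicators are routine once trivialization is in hand.
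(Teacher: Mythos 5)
Your argument is correct, and it follows the standard route: the paper itself gives no proof of this statement (it is quoted from \cite{Ghristsurvey} as a preliminary), and the proof recorded there and in \cite{Dries} is exactly yours --- well-definedness via the intrinsic formula $\sum_{s\neq 0}s\,\chi(\{h=s\})$, properties (1) and (2) by common refinement of triangulations and the pointwise identity $\mathbb{1}_{A\cup B}=\mathbb{1}_A+\mathbb{1}_B-\mathbb{1}_{A\cap B}$, and Fubini by reducing to indicators and invoking definable (Hardt) trivialization to make the fiberwise characteristic piecewise constant, hence constructible, before applying the product formula cell by cell. The only point worth stating explicitly rather than calling routine is that the compact support of the pushforward $g(y)=\chi(f^{-1}(y)\cap A)$ comes from the continuity of $f$: $\operatorname{supp}(g)\subseteq f(\operatorname{supp}(h))$, which is compact.
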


\section{Spaces homeomorphic to definable sets and Euler Calculus}

We begin by recalling the following theorem of Beke \cite{Beke}. 

\begin{theorem}[\cite{Beke}] \label{theorem:Beke}
	Let $X$ and $Y$ be definable sets. If they are homeomorphic (not necessarily a definable homeomorphism), then $\chi(X)=\chi(Y)$. 
\end{theorem}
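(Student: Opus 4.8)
The plan is to identify the combinatorial Euler--Poincar\'e characteristic of a definable set with a genuinely topological invariant that is manifestly preserved by arbitrary homeomorphisms, namely the Euler characteristic associated with compactly supported cohomology. Write $\chi_c(X) = \sum_i (-1)^i \dim_{\mathbb{Q}} H^i_c(X;\mathbb{Q})$. Because every definable set admits a definable triangulation, $X$ is realized as a finite union of open simplices; consequently $H^*_c(X;\mathbb{Q})$ is finitely generated and $\chi_c(X)$ is a well-defined integer. The whole argument then splits into two essentially independent assertions: that $\chi_c$ computes the combinatorial characteristic $\chi$, and that $\chi_c$ is a homeomorphism invariant.

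For the first assertion, I would use the long exact sequence in compactly supported cohomology attached to a decomposition of $X$ into an open subset $U$ and its closed complement $Z = X \setminus U$, which yields the additivity relation $\chi_c(X) = \chi_c(U) + \chi_c(Z)$. Taking the triangulation of $X$ and peeling off the open simplices one dimension at a time (a top-dimensional open simplex is open in $X$, and what remains is again a union of open simplices of strictly lower dimension), an induction reduces the computation to the single-cell base case. Since an open $k$-simplex is homeomorphic to $\mathbb{R}^k$, for which $H^i_c$ is $\mathbb{Q}$ in degree $k$ and zero otherwise, each open $k$-simplex contributes $(-1)^k$. Summing gives $\chi_c(X) = \sum_\alpha (-1)^{\dim \sigma_\alpha} = \chi(X)$, matching the combinatorial definition.

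The second assertion is where the non-definability of the homeomorphism is harmless: compact support is a purely topological condition, so a homeomorphism $f \colon X \to Y$ transports compactly supported cochains to compactly supported cochains and induces an isomorphism $H^*_c(X;\mathbb{Q}) \cong H^*_c(Y;\mathbb{Q})$. Hence $\chi_c(X) = \chi_c(Y)$, and combining with the first assertion gives $\chi(X) = \chi(Y)$, as required. Note that this argument never invokes homotopy invariance (which genuinely fails, as the Remark records for the open simplex), only the weaker homeomorphism invariance that $H^*_c$ does possess.

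I expect the main obstacle to be the first step: making the additivity-plus-induction rigorous for the tame but possibly non-locally-compact spaces that arise as arbitrary unions of open simplices of a complex. One must check that the open--closed long exact sequence for $H^*_c$ is available in this generality and that the base-case computation $\chi_c(\mathbb{R}^k) = (-1)^k$ propagates correctly through the induction. An alternative, more self-contained route would bypass cohomology altogether by reducing to compact definable sets---on which the combinatorial characteristic coincides with the homotopy-invariant singular Euler characteristic---via repeated use of the additivity property $\chi(X \cup Y) = \chi(X) + \chi(Y) - \chi(X \cap Y)$; but organizing that reduction so that each gluing is controlled by the (non-definable) homeomorphism is precisely the difficulty that the compactly supported formulation absorbs cleanly.
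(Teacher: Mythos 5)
The paper does not prove this statement at all---it is imported verbatim from \cite{Beke}---so the comparison below is with Beke's actual argument. Your proposal correctly isolates the right invariant ($\chi_c$, the compactly supported Euler characteristic, which is preserved by arbitrary homeomorphisms because properness is purely topological), and your reduction $\chi=\chi_c$ by peeling off top-dimensional open simplices is sound \emph{when $X$ is locally compact}: there the open--closed sequence for $H^*_c$ is available, and both the open union of top-dimensional simplices and its closed complement are again locally compact, so the induction closes up. The genuine gap is exactly the one you flag in your last paragraph, and it is not a technicality that can be absorbed: definable sets need not be locally compact (e.g.\ an open $2$-simplex together with one vertex of its boundary is definable and fails local compactness at that vertex), and for such spaces the additivity $\chi_c(X)=\chi_c(U)+\chi_c(Z)$ breaks down, so the identification of the combinatorial $\chi$ with $\chi_c$ cannot be run directly. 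Since the whole point of the theorem is that the homeomorphism is \emph{not} definable, you also cannot first replace $X$ by a homeomorphic locally compact definable set.

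Beke's resolution, which is the one idea missing from your proposal, is a canonical, purely topological dévissage: let $X^{(1)}\subseteq X$ be the set of points admitting no compact neighbourhood. Because $X$ is Hausdorff this locus is closed, its complement is a locally compact space, and for a definable $X$ it is again definable and of strictly smaller dimension; iterating gives a finite filtration $X\supseteq X^{(1)}\supseteq X^{(2)}\supseteq\cdots$ whose successive differences are locally compact definable sets. Crucially, the filtration is defined by a topological property, so any homeomorphism $f\colon X\to Y$ carries $X^{(i)}$ onto $Y^{(i)}$ and restricts to homeomorphisms of the locally compact differences. Additivity of the combinatorial characteristic gives $\chi(X)=\sum_i\chi\bigl(X^{(i)}\setminus X^{(i+1)}\bigr)$, and on each difference your argument applies verbatim: $\chi=\chi_c$ there, and $\chi_c$ is a homeomorphism invariant. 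An induction on dimension then yields $\chi(X)=\chi(Y)$. In short: your two ``essentially independent assertions'' are both correct on locally compact definable sets, but the theorem needs the local-compactness filtration to reduce the general case to that one, and without it the proof does not go through.
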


\begin{definition}
	Given a topological space $X$ homeomorphic to a definable set $A$, we define its {\em Euler-Poincar\'e characteristic} as $\chi(X)\coloneqq\chi(A)$. 
\end{definition}

Theorem \ref{theorem:Beke} guarantees that the Euler-Poincar\'e characteristic of   spaces homeomorphic to  definable sets is well-defined.

\begin{example}
	Examples of spaces homeomorphic to definable sets are:
	\begin{itemize}
		\item Subcomplexes of regular finite CW-complexes.
		\item Smooth compact manifolds (with or without boundary). 
		\item Topological compact manifolds (with or without boundary) of dimension less or equal to three.
	\end{itemize}
\end{example}

\begin{theorem}
	\label{tma:propiedades:conjuntos:homeo:a:definibles}
	Let $X$ and $Y$ be topological spaces   homeomorphic to definable sets. Then:
	\begin{enumerate}
		\item given a finite family $\{X_i\}_{i=1}^m$ of subsets of $X$ such that each member $X_i$ is homeomorphic to a definable set, there exists a triangulation of $X$ compatible with the collection of subsets. 
		\item If $X$ is compact, then the simplicial complex associated with the triangulation of $X$ is compact (a finite simplicial complex).
		\item If $X$ is compact, then its Euler-Poincar\'e characteristic defined counting open simplices equals its Euler-Poincar\'e characteristic using singular (co)\-ho\-mo\-lo\-gy, i.e., $$\chi(X)=\sum_n(-1)^n \rank H_n(X,\Z).$$ 
		\item Inclusion-exclusion principle: 
		\[\chi(X \cup Y)=\chi(X)+\chi(Y)-\chi(X \cap Y).\]
		\item Multiplicativity of the characteristic: $$\chi(X\times Y)=\chi(X) \cdot \chi(Y).$$
	\end{enumerate} 
\end{theorem}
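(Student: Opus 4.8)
The plan is to reduce every assertion to the definable category by transport of structure along a fixed homeomorphism, treating part (1) as the foundation on which (2)--(4) rest. For part (1), fix a homeomorphism $h\colon A\to X$ with $A\subset\R^n$ definable, and pull the family back to $A_i\coloneqq h^{-1}(X_i)\subset A$. If the $A_i$ are definable, the definable triangulation theorem yields a definable triangulation $g\colon|\K|\to A$ compatible with $\{A_i\}$, and then $h\circ g\colon|\K|\to X$ is a triangulation of $X$ in which each $X_i=h\circ g(\,|\K_i|\,)$ is a union of open simplices, i.e. compatible with the collection. I expect this to be the main obstacle: a non-definable homeomorphism need not carry the $X_i$ to definable subsets, so the delicate point is to produce a single definable model $(A;A_1,\dots,A_m)$ of the configuration $(X;X_1,\dots,X_m)$ rather than homeomorphisms chosen independently for $X$ and for each $X_i$. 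This is exactly where the hypothesis that each piece is homeomorphic to a definable set must be upgraded to a simultaneous definable realization, and where Beke's theorem (Theorem~\ref{theorem:Beke}) is invoked to keep the characteristic unambiguous.

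Granting (1), parts (2) and (3) are short. For (2), a homeomorphism preserves compactness, so $|\K|\cong X$ is compact; since a generalized simplicial complex is by definition a finite collection of open simplices, $\K$ is automatically finite and bounded, and compactness of $|\K|$ forces $\K$ to be closed under passing to faces, i.e. a finite (closed) simplicial complex. For (3), the combinatorial characteristic $\sum_i(-1)^ic_i$ of a finite simplicial complex equals the alternating sum of its Betti numbers by the classical Euler--Poincar\'e formula; since singular homology is a homeomorphism invariant and $h\circ g$ is a homeomorphism, $\sum_i(-1)^ic_i=\sum_n(-1)^n\rank H_n(X,\Z)$, while the left-hand side is $\chi(X)$ by the definition via simplex counting together with Theorem~\ref{theorem:Beke}.

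For part (4) I would first record that $\chi$ is computed by any triangulation: if $\mathcal{L}$ triangulates a space $S$ homeomorphic to a definable set, then $|\mathcal{L}|$ is itself definable and $\chi(\mathcal{L})=\chi(|\mathcal{L}|)=\chi(S)$ by Theorem~\ref{theorem:Beke}. Applying (1) to $X\cup Y$ with the subfamily $\{X,Y\}$ produces a triangulation in which $X$, $Y$, and hence $X\cap Y$ are subcomplexes; counting open $i$-simplices gives $c_i(X\cup Y)=c_i(X)+c_i(Y)-c_i(X\cap Y)$ in each dimension, and the alternating sum yields the inclusion--exclusion identity once each count is identified with the corresponding $\chi$ by the preceding remark. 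Part (5) is the most direct: if $X\cong A$ and $Y\cong B$ with $A,B$ definable, then $X\times Y\cong A\times B$, which is again definable, so $\chi(X\times Y)=\chi(A\times B)=\chi(A)\cdot\chi(B)=\chi(X)\cdot\chi(Y)$ by the multiplicativity of the characteristic for definable sets. Throughout, the only genuinely nontrivial step is securing the compatible triangulation in (1); everything else is bookkeeping with simplex counts together with repeated use of Theorem~\ref{theorem:Beke} to pass between a space and its definable model.
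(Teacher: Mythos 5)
The paper states this theorem without proof, so there is no argument of the authors' to set yours against; judged on its own terms, your reductions for (2), (3) and (5) and the simplex-counting derivation of (4) from (1) are sound bookkeeping. The genuine problem is part (1), which you have correctly named but not closed. Your argument needs the pullbacks $A_i=h^{-1}(X_i)$ to be definable in the fixed definable model $A$ of $X$, whereas the hypothesis only provides, for each $i$ separately, some homeomorphism of $X_i$ onto some definable set; nothing upgrades this to a single definable realization of the configuration $(X;X_1,\dots,X_m)$, and Theorem~\ref{theorem:Beke} cannot do that job---it equates Euler characteristics of definable sets already known to be homeomorphic, it does not manufacture definable models of subspace arrangements. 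The gap is not cosmetic: read literally, part (1) is false. Take $X=\R^3$ and $X_1$ a Fox--Artin wild arc; $X_1$ is homeomorphic to the definable set $[0,1]$, yet a triangulation of $\R^3$ compatible with $X_1$ would give a homeomorphism of pairs from $(\R^3,X_1)$ to a polyhedral pair, forcing the complement of $X_1$ to be simply connected, which it is not.

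The repair is visible elsewhere in the paper: the definition of $HCF(X)$ requires the level sets to be carried to definable subsets of $A$ \emph{by restrictions of the fixed homeomorphism} $\beta$, not merely to be abstractly homeomorphic to definable sets. Under that stronger reading of (1)---each $h^{-1}(X_i)$ definable in $A$---your transport-of-structure argument goes through verbatim via the definable triangulation theorem. The same caveat is needed in (4), where before any counting you must place $X\cup Y$, $X$, $Y$ (hence $X\cap Y$) inside one definable model simultaneously; your appeal to (1) with the subfamily $\{X,Y\}$ presupposes exactly this. State that hypothesis explicitly rather than hoping Beke's theorem supplies it.
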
 

Moreover, given a space $X$ homeomorphic to a definable set $A$, the existence of the homeomorphism $\beta\colon  X \rightarrow A$ guarantees that the theorem about deconstruction of constructible functions (Theorem \ref{tma:deconstruction:functions}) extends to functions defined on $X$ such that their level sets are mapped by $\beta$ to definable subsets of $A$.

\begin{definition}
	Assume that $X$ is a  space homeomorphic to a definable set $A$ by the homeomorphism $\beta \colon  X \rightarrow A$. We define the $\Z$-module $HCF(X)$ of {\em homeo-integrable functions}, that is, functions $h\colon  X \rightarrow \Z$ which are bounded, have compact support and their level sets are homeomorphic to definable subsets of $A$ by restrictions of the homeomorphism $\beta$. 
\end{definition}


\begin{theorem} \label{tma:deconstruc_funciones:homeo:construibles}
	If $X$ is a space homeomorphic to a definable set, and $h \in HCF(X)$, then $h$ can be decomposed uniquely as a finite linear combination of characteristic functions of the open simplices of a triangulation compatible with the support of $h$, i.e.,  $h=\sum_\alpha c_\alpha\mathbb{1}_{\sigma_\alpha}$, where  $c_\alpha\in \Z$ and $\sigma_\alpha$  
	are the open simplices of the triangulation. The function $h$ can also be written as a linear combination of characteristic functions on the level sets. 
\end{theorem}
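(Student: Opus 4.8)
The plan is to reduce the claim to the deconstruction theorem for definable sets (Theorem~\ref{tma:deconstruction:functions}) by transporting $h$ across the defining homeomorphism $\beta\colon X\to A$. First I would form the pushforward $g\coloneqq h\circ\beta^{-1}\colon A\to\Z$ and verify that $g\in CF(A)$. Boundedness is clear because $g$ and $h$ have the same image. For each value $c$ in the range of $h$ the level set of $g$ satisfies $g^{-1}(c)=\beta\big(h^{-1}(c)\big)$, and this is a definable subset of $A$ by the very definition of $HCF(X)$, since the restriction $\beta|_{h^{-1}(c)}$ is a homeomorphism onto a definable set. Compactness of the support transports as well: $\operatorname{supp} g=\beta(\operatorname{supp} h)$ is the continuous image of a compact set, and the nonzero-set is the finite union $\bigcup_{c\neq 0}\beta(h^{-1}(c))$ of definable sets, hence definable.

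Once $g\in CF(A)$ is established, I would apply Theorem~\ref{tma:deconstruction:functions} to obtain $g=\sum_\alpha c_\alpha\mathbb{1}_{\tau_\alpha}$ with $c_\alpha\in\Z$, where the $\tau_\alpha$ are the open simplices of a triangulation $t\colon|\K|\to A$ on which $g$ is constant; using the definable triangulation theorem I may moreover take this triangulation compatible with the definable set $\operatorname{supp} g$. I then transport it back along $\beta^{-1}$: the composition $\beta^{-1}\circ t\colon|\K|\to X$ is again a homeomorphism, hence a triangulation of $X$, whose open simplices are $\sigma_\alpha=\beta^{-1}(\tau_\alpha)$, and it is compatible with $\operatorname{supp} h=\beta^{-1}(\operatorname{supp} g)$. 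Since $\mathbb{1}_{\sigma_\alpha}=\mathbb{1}_{\tau_\alpha}\circ\beta$, precomposing the decomposition of $g$ with $\beta$ yields $h=g\circ\beta=\sum_\alpha c_\alpha\mathbb{1}_{\sigma_\alpha}$, which is the asserted expansion.

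For uniqueness relative to this triangulation I would use that the open simplices $\sigma_\alpha$ are pairwise disjoint: evaluating an equality $\sum_\alpha c_\alpha\mathbb{1}_{\sigma_\alpha}=\sum_\alpha c'_\alpha\mathbb{1}_{\sigma_\alpha}$ at any point of a fixed $\sigma_\alpha$ forces $c_\alpha=c'_\alpha$, so each coefficient is pinned down as the constant value of $h$ on $\sigma_\alpha$. The alternative expression through level sets is then immediate and needs no triangulation: as $h$ is integer-valued, bounded and compactly supported it attains finitely many nonzero values $c_1,\dots,c_k$, every point lies in exactly one level set, and therefore $h=\sum_{j=1}^{k}c_j\mathbb{1}_{h^{-1}(c_j)}$, each $h^{-1}(c_j)$ being homeomorphic to the definable set $\beta(h^{-1}(c_j))$ by hypothesis.

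The only genuinely delicate step is the verification that $g\in CF(A)$; everything afterwards is formal transport along $\beta$. The subtlety is that $\beta$ is not assumed to be a definable homeomorphism, so definability of the level sets of $g$ cannot be deduced from any structure on $X$ in general. It is exactly the tailored requirement built into the definition of $HCF(X)$—that the restrictions of $\beta$ send the level sets of $h$ onto definable subsets of $A$—that makes the reduction to Theorem~\ref{tma:deconstruction:functions} legitimate, and I expect this to be the crux of the argument.
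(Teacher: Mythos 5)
Your proposal is correct and follows exactly the route the paper intends: the paper offers no separate proof of Theorem~\ref{tma:deconstruc_funciones:homeo:construibles}, only the preceding remark that the homeomorphism $\beta\colon X\to A$ transports the statement back to Theorem~\ref{tma:deconstruction:functions}, which is precisely your pushforward-and-pullback argument via $g=h\circ\beta^{-1}$. Your explicit verification that $g\in CF(A)$ and your uniqueness argument via disjointness of open simplices simply make rigorous what the paper leaves implicit.
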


Now, we can extend Euler integration to functions in $HCF(X)$, where $X$ is a space homeomorphic to a definable set.  

\begin{definition}
	Given a space $X$  homeomorphic to a definable set, we define the {\em integral with respect to the Euler-Poincar\'e characteristic} of the function $h=\sum_\alpha c_\alpha\mathbb{1}_{\sigma_\alpha} \in HCF(X)$ as  $$\int\nolimits_X h \, d\chi\coloneqq\sum_\alpha c_\alpha\chi(\sigma_\alpha).$$ 
\end{definition}

Theorem \ref{tma:deconstruc_funciones:homeo:construibles} guarantees that this integral  is well-defined. As it happens in classical Euler Calculus, if $U$ is a subspace of $X$ homeomorphic to a definable set, then $\mathbb{1}_U\in HCF(X)$ and $\int\nolimits_X\mathbb{1}_U \, d\chi=\chi(U)$.  The results regarding the additivity of the integral also extend without major modifications. 

\begin{theorem}[Additivity of the integral with respect to the integrand]
	If $X$ is a space homeomorphic to a definable set, and $\{h_i\}_{i=1}^m$ is a finite set of functions in $HCF(X)$, then 
	$$
	\int\nolimits_X{\bigg(\sum_{i=1}^m h_i\bigg)} \, d\chi=\sum_{i=1}^m{\int\nolimits_X h_i} \, d\chi.
	$$
\end{theorem}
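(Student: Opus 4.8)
The plan is to reduce the additivity statement for finite sums to the additivity of the Euler integral defined on each summand by passing through a common triangulation. The essential point is that the integral defined in the previous definition depends on writing a homeo-integrable function as a linear combination of characteristic functions of open simplices, so the main task is to ensure that all the functions $h_1,\ldots,h_m$ together with their sum can be expressed with respect to a single triangulation, after which the claim reduces to the linearity of a finite sum over simplices.

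First I would invoke Theorem \ref{tma:propiedades:conjuntos:homeo:a:definibles}(1): since each $h_i \in HCF(X)$ has compact support whose level sets are homeomorphic (via restrictions of $\beta$) to definable subsets of $A$, the collection of all these level sets forms a finite family of subsets of $X$, each homeomorphic to a definable set. Hence there exists a triangulation of $X$ compatible with this whole collection. Working with that single triangulation, Theorem \ref{tma:deconstruc_funciones:homeo:construibles} lets me write each $h_i = \sum_\alpha c_\alpha^{(i)} \mathbb{1}_{\sigma_\alpha}$ with the \emph{same} open simplices $\sigma_\alpha$ indexing every function; the coefficient $c_\alpha^{(i)}$ is the common value of $h_i$ on $\sigma_\alpha$, which is well-defined because the triangulation is compatible with the level sets of $h_i$.

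Once all summands share the index set $\{\sigma_\alpha\}$, the sum $\sum_{i=1}^m h_i$ is itself homeo-integrable and decomposes as $\sum_\alpha \big(\sum_{i=1}^m c_\alpha^{(i)}\big)\mathbb{1}_{\sigma_\alpha}$ over the same simplices, since on each open simplex $\sigma_\alpha$ the functions are constant and the pointwise sum of constants is the sum of the constants. Applying the definition of the integral to this decomposition gives
\[
\int\nolimits_X \bigg(\sum_{i=1}^m h_i\bigg)\,d\chi = \sum_\alpha \bigg(\sum_{i=1}^m c_\alpha^{(i)}\bigg)\chi(\sigma_\alpha) = \sum_{i=1}^m \sum_\alpha c_\alpha^{(i)}\chi(\sigma_\alpha) = \sum_{i=1}^m \int\nolimits_X h_i\,d\chi,
\]
where the middle equality is just interchanging the order of a finite double sum, and the outer equalities are the definition of the integral applied to each $h_i$.

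The only genuine obstacle I anticipate is the \emph{well-definedness} underlying this argument: I must confirm that the value of $\int_X h_i\,d\chi$ computed with respect to the common refined triangulation agrees with its value computed on $h_i$'s own minimal triangulation. This is precisely the uniqueness guaranteed by Theorem \ref{tma:deconstruc_funciones:homeo:construibles}, which asserts that the decomposition is independent of the chosen compatible triangulation, so the integral is invariant under refinement. Granting that, the rest is bookkeeping, and the proof is essentially a transcription of the classical argument from \cite{Ghristsurvey} with ``definable triangulation'' replaced throughout by the triangulation furnished by Theorem \ref{tma:propiedades:conjuntos:homeo:a:definibles}(1).
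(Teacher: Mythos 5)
Your proof is correct and follows exactly the route the paper intends: the paper gives no explicit proof, remarking only that the additivity results ``extend without major modifications'' from classical Euler calculus, and your argument --- a common triangulation compatible with all level sets via Theorem \ref{tma:propiedades:conjuntos:homeo:a:definibles}(1), decomposition of each $h_i$ over the same open simplices via Theorem \ref{tma:deconstruc_funciones:homeo:construibles}, and an interchange of finite sums --- is precisely that classical argument transported to the homeo-integrable setting. Your explicit attention to well-definedness under refinement of the triangulation is a welcome addition rather than a deviation.
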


\begin{theorem}[Additivity of the integral with respect to the domain] \label{theorem:aditivity:domain}
	Let $X$ be a space homeomorphic to a definable set, and let $h\colon  X \rightarrow \Z$ be a function in $HCF(X)$. If $\{X_i\}_{i=1}^m$ is a finite cover of $X$ by subspaces homeomorphic to definable subsets, then 
	\begin{align*}
	\int\nolimits_{X}h \, d\chi=&\sum_i  \int\nolimits_{X_i}h \, d\chi - \sum_{i> j} \int\nolimits_{X_i\cap X_j}h \, d\chi 
	\\
	&\quad +\sum_{i>j> k} \int\nolimits_{X_i\cap X_j \cap X_k}h \, d\chi - \cdots
	+ (-1)^{m+1} \int\nolimits_{X_1 \cap  \cdots \cap X_m}h \, d\chi.
	\end{align*}
\end{theorem}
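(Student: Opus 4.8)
The plan is to reduce the $m$-fold formula to the purely set-theoretic inclusion--exclusion identity for indicator functions, and then transport it through the integral using the additivity with respect to the integrand already established. Write $\beta\colon X\to A$ for the homeomorphism onto the definable set $A$, and for a nonempty index set $I\subseteq\{1,\dots,m\}$ put $X_I=\bigcap_{i\in I}X_i$. Since $\{X_i\}$ is a cover of $X$, every point of $X$ lies in at least one $X_i$, so the pointwise identity
$$\mathbb{1}_X=\sum_{\emptyset\neq I}(-1)^{|I|+1}\mathbb{1}_{X_I}$$
holds on $X$. Multiplying by $h$ and using $\mathbb{1}_X\cdot h=h$ gives
$$h=\sum_{\emptyset\neq I}(-1)^{|I|+1}\,\mathbb{1}_{X_I}\,h,$$
which, once each summand is known to belong to $HCF(X)$, can be integrated term by term by the additivity of the integral with respect to the integrand.

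The key point to check is that each $\mathbb{1}_{X_I}h$ is homeo-integrable. Because each $\A_n$ is an algebra of subsets of $\R^n$, the image $\beta(X_I)=\bigcap_{i\in I}\beta(X_i)$ is a finite intersection of definable sets and hence definable; moreover $\beta$ restricts to a homeomorphism of $X_I$ onto $\beta(X_I)$. Consequently every level set of $\mathbb{1}_{X_I}h$, being the intersection of a level set of $h$ with either $X_I$ or its complement in the support, is carried by $\beta$ to a definable set, and $\mathbb{1}_{X_I}h$ inherits boundedness and compact support from $h$; thus $\mathbb{1}_{X_I}h\in HCF(X)$. It then remains to identify $\int_X \mathbb{1}_{X_I}h\,d\chi$ with $\int_{X_I}h\,d\chi$: fixing, via part $(1)$ of Theorem \ref{tma:propiedades:conjuntos:homeo:a:definibles}, a single triangulation of $X$ compatible with all the $X_i$ and with the level sets of $h$, the function $\mathbb{1}_{X_I}h$ becomes simplicial by Theorem \ref{tma:deconstruc_funciones:homeo:construibles}, and its nonzero open simplices are exactly those contained in $X_I$, so both integrals reduce to the same weighted alternating count of open simplices. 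This is the general-$h$ version of the identity $\int_X\mathbb{1}_U\,d\chi=\chi(U)$ recorded before the statement.

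Combining these, integrating the displayed decomposition of $h$ and regrouping the terms according to $|I|=1,2,\dots,m$ yields precisely the alternating sum in the statement. I expect the main obstacle to be bookkeeping rather than conceptual: one must produce a single triangulation of $X$ simultaneously compatible with all the intersections $X_I$ and with the level sets of $h$, so that every product $\mathbb{1}_{X_I}h$ is genuinely simplicial and the passage $\int_X\mathbb{1}_{X_I}h\,d\chi=\int_{X_I}h\,d\chi$ is literal; the existence of such a common refinement is exactly what Theorem \ref{tma:propiedades:conjuntos:homeo:a:definibles}$(1)$ supplies, once one checks, as above, that the finitely many sets $X_I$ are again homeomorphic to definable sets. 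An equivalent route avoids the indicator identity: fixing the common triangulation first, one verifies the formula simplex by simplex, where for a fixed open simplex $\sigma$ the right-hand side collapses to $(-1)^{\dim\sigma}h(\sigma)\sum_{k\ge 1}(-1)^{k+1}\binom{n_\sigma}{k}$ with $n_\sigma=\#\{i:\sigma\subseteq X_i\}\ge 1$, and the binomial sum equals $1$; alternatively one may simply induct on $m$ from the two-set case, which is the instance $m=2$ of the present computation.
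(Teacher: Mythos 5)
Your proof is correct and is essentially the argument the paper intends: the paper gives no explicit proof of this theorem (it only remarks that the additivity results ``extend without major modifications'' from classical Euler calculus), and the classical argument is exactly your inclusion--exclusion identity for indicator functions, integrated term by term over a single triangulation compatible with all the $X_I$ and the level sets of $h$. The one point worth making explicit is that your check that each intersection $X_I$ is again homeomorphic to a definable set requires reading ``subspaces homeomorphic to definable subsets'' as ``via restrictions of the fixed homeomorphism $\beta\colon X\to A$'', which is the reading forced by the paper's definition of $HCF(X)$ and is needed for $\beta(X_I)=\bigcap_{i\in I}\beta(X_i)$ to be definable.
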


\section{Consequences}

In this section we present generalizations of Ghrist's results and two theorems regarding the Euler-Poincar\'e characteristic. First, we obtain a generalization of \cite[Theorem 3.2]{Bary2}.

\begin{theorem}
	Assume that $X$ is a space homeomorphic to a definable set and $h\colon  X \rightarrow \N$ is a function in $HCF(X)$, $h=\sum_{\alpha\in I}\mathbb{1}_{U_\alpha}$, where the supports $\{U_\alpha\}_{\alpha \in I}$ are subspaces homeomorphic to definable subsets. If the Euler-Poincar\'e characteristic of all the supports  $U_\alpha$ is $N$ and different from zero, then the number of observables is
	$$\vert I\vert=\frac{1}{N} \int\nolimits_X h \, d\chi.$$
\end{theorem} 

\begin{remark}
	Another results about sensor networks presented in \cite{Bary2} can be generalized to spaces homeomorphic to definable sets by discretizing time.
\end{remark}

Using the generalization of Euler Calculus to spaces homeomorphic to definable sets we prove now the relationship between the Euler-Poincar\'e characteristics of the spaces involved in a locally trivial fiber bundle. The proof is mainly of combinatorial nature, avoiding the use of spectral sequences as in \cite[Theorem 1, p. 481]{Spanier} or \cite{Serre}, and improves \cite{Spanierjoven2}. Moreover, the theorem below generalizes related results since it works for all spaces homeomorphic to definable sets and we only need the compactness assumption on the base space.

\begin{theorem}
	Let $p \colon  E \rightarrow B$ be a locally trivial fiber bundle where 
	\begin{enumerate}
		\item[(a)] the total space $E$, the base $B$ and the fiber $F$ are homeomorphic to definable sets;
		\item[(b)] $B$ is compact.
	\end{enumerate}
	Then $\chi(E)=\chi(B)\cdot \chi(F)$.
\end{theorem}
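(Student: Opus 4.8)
The plan is to discretize the Fubini theorem. Morally one integrates the constant function $\mathbb{1}_E$ over $E$ by pushing it forward along $p$: since every fiber $p^{-1}(b)$ is homeomorphic to $F$, the fiberwise integral $\int_{p^{-1}(b)}\mathbb{1}_E\,d\chi$ equals the constant $\chi(F)$, and integrating this constant over $B$ yields $\chi(F)\cdot\chi(B)$. The obstruction to invoking the Fubini theorem literally is that $p$ is only assumed continuous and locally trivial, not definable, so the version stated for definable maps does not apply. I would therefore replace the continuous fiber integration by a combinatorial one supported on a triangulation of the base.

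First I would build a trivializing triangulation of the base. Since $p$ is locally trivial, cover $B$ by open sets $\{V_\alpha\}$ with $p^{-1}(V_\alpha)\cong V_\alpha\times F$. Because $B$ is compact and homeomorphic to a definable set, it admits a finite triangulation; refining it (via the Lebesgue number of the cover $\{V_\alpha\}$) I may assume that the closure of each open simplex lies in some $V_\alpha$. Let $\sigma_1,\dots,\sigma_k$ be the resulting open simplices, so that $B=\bigsqcup_i\sigma_i$ and $\chi(B)=\sum_{i}(-1)^{\dim\sigma_i}$. Over each $\sigma_i$ the bundle is trivial: restricting the trivialization over the containing $V_\alpha$ gives a homeomorphism $p^{-1}(\sigma_i)\cong\sigma_i\times F$. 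Since an open simplex is definable and $F$ is homeomorphic to a definable set, the product $\sigma_i\times F$ is homeomorphic to a definable set, hence so is $p^{-1}(\sigma_i)$. Using multiplicativity (part (5) of Theorem \ref{tma:propiedades:conjuntos:homeo:a:definibles}) together with $\chi(\sigma_i)=(-1)^{\dim\sigma_i}$, I would compute
\[
\chi\big(p^{-1}(\sigma_i)\big)=\chi(\sigma_i\times F)=\chi(\sigma_i)\,\chi(F)=(-1)^{\dim\sigma_i}\,\chi(F).
\]

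Finally I would assemble the pieces by additivity. The sets $\{p^{-1}(\sigma_i)\}_{i=1}^{k}$ form a finite partition of $E$ into subspaces each homeomorphic to a definable set, so part (1) of Theorem \ref{tma:propiedades:conjuntos:homeo:a:definibles} furnishes a single triangulation of $E$ compatible with this family; in it every $p^{-1}(\sigma_i)$ is a subcomplex, which simultaneously exhibits all the pieces as definable subsets of one model of $E$ and makes the Euler characteristic additive over the partition. Summing the signed simplex counts over $E$ then gives
\[
\chi(E)=\sum_{i=1}^{k}\chi\big(p^{-1}(\sigma_i)\big)=\chi(F)\sum_{i=1}^{k}(-1)^{\dim\sigma_i}=\chi(F)\cdot\chi(B).
\]

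I expect the main obstacle to be this last assembling step. Because $p$ need not be definable, one cannot take for granted that the fiber decomposition is realized inside a common definable model of $E$, so the version of Fubini for definable maps is genuinely unavailable. The whole argument hinges on using the compatible-triangulation theorem (part (1)) to manufacture such a model purely from the abstract local homeomorphisms $p^{-1}(\sigma_i)\cong\sigma_i\times F$; once this model exists, the invariance of $\chi$ under non-definable homeomorphisms (Theorem \ref{theorem:Beke}) guarantees that the value of $\chi\big(p^{-1}(\sigma_i)\big)$ computed combinatorially in that model agrees with the one obtained from multiplicativity, and the telescoping sum closes the gap.
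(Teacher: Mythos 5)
Your proof is correct and follows essentially the same strategy as the paper's: cover the compact base by trivializing opens, use the Lebesgue number to refine a triangulation of $B$ until the bundle is trivial over each piece, identify $p^{-1}(\text{piece})$ with $\text{piece}\times F$, and conclude by multiplicativity plus additivity of $\chi$. The one real difference is in the decomposition: the paper works with an overlapping cover $\{B_i\}$ and runs the full inclusion--exclusion expansion of Theorem \ref{theorem:aditivity:domain}, factoring $\chi(F)$ out of every alternating term, whereas you partition $B$ into the \emph{disjoint} open simplices $\sigma_i$ and sum $\chi(p^{-1}(\sigma_i))=(-1)^{\dim\sigma_i}\chi(F)$ directly; this sidesteps the alternating-sum bookkeeping entirely (equivalently, all higher intersection terms in Theorem \ref{theorem:aditivity:domain} vanish for a disjoint cover). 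Your version is the cleaner of the two, and it makes explicit the one point the paper leaves implicit: since $p$ is not definable, the pieces $p^{-1}(\sigma_i)$ must be realized as subcomplexes of a single compatible triangulation of $E$ (part (1) of Theorem \ref{tma:propiedades:conjuntos:homeo:a:definibles}) before additivity can be invoked --- a caveat that applies equally to the paper's own use of Theorem \ref{theorem:aditivity:domain}.
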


\begin{proof}
	It follows from the Lebesgue number lemma, Theorem \ref{theorem:aditivity:domain} and the multiplicativity of the characteristic. Being more precise, since $B$ is compact we can cover it with a finite trivialization by open sets $\{U_j\}_{j\in J}$. Consider a triangulation of $B$ and subdivide it until we have a finite refinement $\{B_i\}_{i\in I}$ by subspaces  homeomorphic to definable subsets. Assume $I=\{1,\ldots,m\}$. Then we have:
	\begin{align*}
	\chi(E)= &\int\nolimits_X\mathbb{1}_E \, d\chi\\
	= &\sum_i \int\nolimits_{p^{-1}(B_i)}\mathbb{1}_E \, d\chi - \sum_{i\neq j} \int\nolimits_{p^{-1}(B_i)\cap p^{-1}(B_j)}\mathbb{1}_E \, d\chi 
	\\
	&\quad+\sum_{i> j > k} \int\nolimits_{p^{-1}(B_i)\cap p^{-1}(B_j) \cap p^{-1}(B_k)}\mathbb{1}_E \, d\chi \\
	&\quad - \cdots + (-1)^{m+1} \int\nolimits_{p^{-1}(B_1) \cap  \cdots \cap p^{-1}(B_m)}\mathbb{1}_E \, d\chi\\
	& \text{}\\
	= &\sum_i \int\nolimits_{p^{-1}(B_i)}\mathbb{1}_E \, d\chi - \sum_{i> j} \int\nolimits_{p^{-1}(B_i\cap B_j)}\mathbb{1}_E \, d\chi \\
	&\quad+\sum_{i> j > k} \int\nolimits_{p^{-1}(B_i\cap B_j \cap B_k)}\mathbb{1}_E  \, d\chi \\
	&\quad - \cdots
	+ (-1)^{m+1} \int\nolimits_{p^{-1}(B_1 \cap  \cdots \cap B_m)}\mathbb{1}_E  \, d\chi\\
	& \text{}\\
	= &\sum_i  \chi(p^{-1}(B_i)) - \sum_{i> j} \chi(p^{-1}(B_i\cap B_j))\\
	&\quad+\sum_{i> j > k} \chi(p^{-1}(B_i\cap B_j \cap B_k))\\
	&\quad - \cdots + (-1)^{m+1} \chi(p^{-1}(B_1 \cap  \cdots \cap B_m))\\
	& \text{}\\
	= &\sum_i  \chi(B_i \times F) - \sum_{i> j} \chi((B_i\cap B_j)\times F)\\
	&\quad+\sum_{i> j > k } \chi((B_i\cap B_j \cap B_k) \times F )\\
	&\quad - \cdots + (-1)^{m+1} \chi((B_1 \cap  \cdots \cap B_m)\times F)\\
	& \text{}\\
	= &\sum_i  \chi(B_i) \cdot \chi(F) - \sum_{i> j} \chi(B_i\cap B_j) \cdot \chi(F) \\
	&\quad +\sum_{i> j > k} \chi(B_i\cap B_j \cap B_k)\cdot \chi(F)\\
	&\quad  - \cdots + (-1)^{m+1} \chi(B_1 \cap  \cdots \cap B_m) \cdot \chi(F)\\
	& \text{}\\
	= &\chi(F) \cdot \bigg( \sum_i  \chi(B_i) - \sum_{i> j} \chi(B_i\cap B_j) \\
	&\quad+\sum_{i> j > k} \chi(B_i\cap B_j \cap B_k)\\
	&\quad - \cdots + (-1)^{m+1} \chi(B_1 \cap  \cdots \cap B_m) \bigg)\\
	& \text{}\\
	= & \chi(F) \cdot \chi(B). 
	\end{align*}	
\end{proof}

Notice that this result can be thought as being essentially Fubini's theorem, where the characteristic $\chi(F)$ is a constant.

By using the same ideas, a purely combinatorial proof of the Riemann-Hurwitz Formula, alternative to the one presented in \cite{Viro},  can also be given. 

\begin{theorem}[Riemann-Hurwitz]
	If $f\colon  \widetilde{S} \rightarrow S$ is a ramified covering between two compact Riemann surfaces without boundary, i.e., a non constant holomorphic map, then
	\begin{equation*}
	\chi(\widetilde{S})=n\chi(S) -\sum_{x\in S}(e_f(x)-1),
	\end{equation*}
	where $e_f(x)$ is the ramification index.
\end{theorem}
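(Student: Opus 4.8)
The plan is to mimic the combinatorial proof of the fiber bundle theorem above, splitting $S$ into the locus where $f$ is an honest covering and the finite set of branch points, and then reassembling the pieces with the additivity property of Theorem~\ref{theorem:aditivity:domain}. First I would record the standard complex-analytic input: a non-constant holomorphic map $f\colon \widetilde S\to S$ between compact Riemann surfaces has a well-defined degree $n$, its branch locus $B=\{y_1,\dots,y_s\}\subset S$ is finite, and over $S\setminus B$ it restricts to an $n$-sheeted covering. Since $\widetilde S$ and $S$ are compact $2$-manifolds, they (and all the definably nice subsets appearing below) are homeomorphic to definable sets, so the machinery of the previous sections applies.

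Next I would compute $\chi\big(f^{-1}(S\setminus B)\big)$. Using Theorem~\ref{tma:propiedades:conjuntos:homeo:a:definibles}(1) I would choose a triangulation of $S$ compatible with the finite subset $B$, so that each branch point is a vertex and $S\setminus B$ is exactly the union of the remaining open simplices. Every such open simplex $\sigma\subset S\setminus B$ is contractible, hence the covering trivializes over it and $f^{-1}(\sigma)$ is homeomorphic to $n$ disjoint copies of $\sigma$; thus $\chi(f^{-1}(\sigma))=n\,\chi(\sigma)$ by multiplicativity (Theorem~\ref{tma:propiedades:conjuntos:homeo:a:definibles}(5)). Summing over the finitely many open simplices of $S\setminus B$ via additivity on this disjoint decomposition gives $\chi(f^{-1}(S\setminus B))=n\,\chi(S\setminus B)=n(\chi(S)-s)$.

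Then I would handle the branch fibers: over each $y_i$ the fiber $f^{-1}(y_i)$ is a finite set of, say, $k_i$ points, so $\chi(f^{-1}(y_i))=k_i$ and $\chi(f^{-1}(B))=\sum_{i=1}^s k_i$. Since $\widetilde S=f^{-1}(S\setminus B)\sqcup f^{-1}(B)$ is a disjoint union, additivity yields
\begin{equation*}
\chi(\widetilde S)=n(\chi(S)-s)+\sum_{i=1}^s k_i=n\chi(S)-\sum_{i=1}^s(n-k_i).
\end{equation*}
Finally I would invoke the degree identity $\sum_{x\in f^{-1}(y_i)}e_f(x)=n$, which together with $\#f^{-1}(y_i)=k_i$ gives $n-k_i=\sum_{x\in f^{-1}(y_i)}(e_f(x)-1)$; summing over $i$ then ranges over all ramification points (unramified preimages contribute $0$) and produces the asserted formula.

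The main obstacle is that $f$ need not be a \emph{definable} map, so the Euler-calculus Fubini theorem cannot be applied directly to it; the argument deliberately sidesteps this by using local triviality of the covering to guarantee that each preimage piece $f^{-1}(\sigma)$ is homeomorphic to a definable set and to compute its characteristic cell by cell, rather than integrating along $f$. A secondary point requiring care is that $S\setminus B$ is non-compact, so I must check that its combinatorial characteristic is still well defined (it is, being an open definable subset) and that the additivity of Theorem~\ref{theorem:aditivity:domain} is being applied to the genuinely finite cell decomposition furnished by the triangulation.
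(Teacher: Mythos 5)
Your proposal is correct and is precisely the argument the paper has in mind: the paper gives no written proof of Riemann--Hurwitz, saying only that ``the same ideas'' as in the fiber bundle theorem apply, and your decomposition of $S$ into the branch locus and its complement, combined with triviality of the covering over each contractible open simplex, additivity, and multiplicativity, is the natural instantiation of that remark. One minor point: your derivation naturally produces the sum $\sum_{x\in \widetilde{S}}(e_f(x)-1)$ over points of the covering space, which is the standard form of the formula; the paper's indexing of the sum over $x\in S$ appears to be a typo, since ramification indices are attached to points of $\widetilde{S}$.
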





\bibliographystyle{plain}
\bibliography{biblio_EC}

\end{document}